\newtheorem{defi}{Definition}
\newtheorem{thm}[defi]{Theorem}
\newtheorem{lem}[defi]{Lemma}
\newtheorem{cor}[defi]{Corollary}
\begin{document}

\title{Optimal Data Distribution for Big-Data All-to-All Comparison using Finite Projective and Affine Planes 
}
%\subtitle{Do you have a subtitle?\\ If so, write it here}

\titlerunning{All-to-All Comparison}        % if too long for running head

\author{Joanne L. Hall* \and Wayne A. Kelly \and Yu-Chu Tian}

\authorrunning{J.L. Hall, W.A. Kelly, Y.-C. Tian} % if too long for running head

\institute{
           J.L. Hall \at
           School of Science \\
           RMIT University\\          
           Tel.: +61-3-9925-2511\\
           \email{joanne.hall@rmit.edu.au}\\
          ORCID: 0000-0003-4484-1920
           \and  
			W.A. Kelly, Y.-C. Tian \at
           School of  Computer Science\\
           Queensland University of Technology\\
           \email{w.kelly@qut.edu.au, y.tian@qut.edu.au}\\
           ORCID: 0000-0002-8554-4589, 0000-0002-8709-5625
}

\date{Received: date / Accepted: date}
% The correct dates will be entered by the editor

\maketitle

\begin{abstract}
An All-to-All Comparison problem is where every element of a data set is compared with every other element. This is analogous to projective planes and affine planes where every pair of points share a common line.   

For large data sets, the comparison computations can be distributed across a cluster of computers.  All-to-All Comparison does not fit the highly successful Map-Reduce pattern, so a new distributed computing framework is required. The principal challenge is to distribute the data in such a way that computations can be scheduled where the data already lies.  

This paper uses projective planes, affine planes and balanced incomplete block designs to design data distributions and schedule computations. The data distributions based on these geometric and combinatorial structures achieve minimal data replication whilst balancing the computational load across the cluster.    %Problems with the All-to-All Comparison pattern appear in applications including computational biology and image processing.    
\keywords{Big Data\and  All-to-All Comparison \and  Finite Projective Plane \and  Discrete Geometry \and Balanced Incomplete Block Design \and Data Distribution \and  Load Balancing \and  Data Locality \and  Distributed Computing}
% \PACS{PACS code1 \and PACS code2 \and more}
 \subclass{	68P05  	%Data structures	
 \and 51E15  	%Affine and projective planes 
}
\end{abstract}

\section{Introduction}

{\em All-to-All Comparison} (ATAC) is a computation in which every element of a data set is pairwise compared with every other element \cite{zhang2016data, zhang2017scalable}.   The well known geometric structures of {\em projective planes} and {\em affine planes}\cite{storme2007finite}, where every pair of points share a common line,  and  the combinatorial structures of balanced incomplete block designs (BIBD)\cite{mathon2006}, where every symbol appears in specified number of blocks, are examples of ATAC designs.  We leverage the geometric and combinatorial structures to create ATAC designs suitable for use in scheduling large ATAC computations.

All-to-All Comparison problems are a challenge with large datasets, so we assume that the data set is very large.
Rather than working with data items individually, we group the data into equivalence classes called {\em data groups}.  
A {\em data group} is defined as a set of data items that are all present on the  same set of machines.

An ATAC data distribution may be visualised using a Venn diagram, as in Figure \ref{figc}, or using incidence matrices, as in Definition \ref{def:dist}.

\begin{figure}%[t]
	\centering
	\begin{tikzpicture}
%	\node at (0.8,2.5) {A};
%	\node at (4.2,2.5) {B};
%	\node at (1.6,0.2) {C};
	\draw (2,2) circle [radius=1];
	\draw (3,2) circle [radius=1];
	\draw (2.5,1) circle [radius=1];
	
	\draw [fill=red] (2.5,2.65) circle[radius=0.05];
	\draw [fill=red] (2.3,2.4) circle[radius=0.05];
	\draw [fill=red] (2.5,2.4) circle[radius=0.05];
	\draw [fill=red] (2.7,2.4) circle[radius=0.05];
	\draw [fill=red] (2.2,2.2) circle[radius=0.05];
	\draw [fill=red] (2.4,2.2) circle[radius=0.05];			
	\draw [fill=red] (2.6,2.2) circle[radius=0.05];	
	\draw [fill=red] (2.8,2.2) circle[radius=0.05];	
	
	\draw [fill=red] (1.9,1.65) circle[radius=0.05];
	\draw [fill=red] (1.7,1.4) circle[radius=0.05];
	\draw [fill=red] (1.9,1.4) circle[radius=0.05];
	\draw [fill=red] (2.1,1.4) circle[radius=0.05];
	\draw [fill=red] (1.6,1.2) circle[radius=0.05];
	\draw [fill=red] (1.8,1.2) circle[radius=0.05];			
	\draw [fill=red] (2.0,1.2) circle[radius=0.05];	
	\draw [fill=red] (2.2,1.2) circle[radius=0.05];

	\draw [fill=red] (3.1,1.65) circle[radius=0.05];
	\draw [fill=red] (2.9,1.4) circle[radius=0.05];
	\draw [fill=red] (3.1,1.4) circle[radius=0.05];
	\draw [fill=red] (3.3,1.4) circle[radius=0.05];
	\draw [fill=red] (2.8,1.2) circle[radius=0.05];
	\draw [fill=red] (3.0,1.2) circle[radius=0.05];			
	\draw [fill=red] (3.2,1.2) circle[radius=0.05];	
	\draw [fill=red] (3.4,1.2) circle[radius=0.05];

	\end{tikzpicture}
		\caption{An ATAC data distribution on three machines.  The 24 data items (red dots) have been partitioned into three data groups of 8 data items).   Each data group is placed on two machines (large circles). \label{figc} } 
	%\vspace{5mm}
%	\begin{tabular}{cc|c|c|c|}
%	 & Size ($\vec{s}$) & Machine A & Machine B & Machine C \\ \hline
%		Data Group 1 & $\nicefrac{1}{3}$ & 1 & 1 & 0 \\ \hline
%		Data Group 2 & $\nicefrac{1}{3}$ & 1 & 0 & 1 \\ \hline
%		Data Group 3 & $\nicefrac{1}{3}$ & 0 & 1 & 1 \\	 \hline	
%		\multicolumn{2}{c|}{Total data:} & $\nicefrac{2}{3}$ & $\nicefrac{2}{3}$ & $\nicefrac{2}{3}$  \\ [2mm]
%	\end{tabular}
\end{figure}
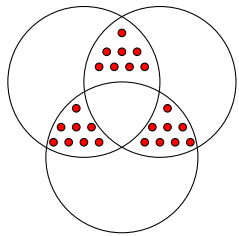	
The objects being compared could be texts\cite{spiccia2016semantic}, genes \cite{guyon2009comparison, zhang2016data,  zhang2017scalable}, or chemical compounds  \cite{alawneh2020scalable, wong2014aligning}.   
When working with  large data sets, the comparison tasks can be distributed across several machines.  
%To ensure efficient computation, the distribution of data and the scheduling of computations are expected to maintain {\em data locality}.
To maximise efficiency, the data required for each computation needs to be on the local machine for which that computation is scheduled, called {\em data locality}. To achieve data locality for an ATAC problem a data distribution is needed such that, for every pair of data items, there exists at least one machine that holds both data items. 
  
Data locality could be achieved by placing every data item on every machine, however, this is not practical with large datasets.  A large amount of network bandwidth and storage space is required to replicate and distribute data.  Unnecessary network traffic, and unnecessary data storage is an unnecessary cost to the organisation doing the computation. % Reducing the amount of data replication reduces the cost of the ATAC computation. 
The cost saving of reducing data replication could be measured in time needed to transfer the data, or the monetary cost of network operations.

A good ATAC data distribution minimizes the amount of data replication and balances computational load so that each machine is doing the same amount of computation. % in a distributed computing environment.
%The physical infrastructure of a compute cluster consisting of computers and network is often shared with other users. Therefore,  minimizing the amount of data distributed reduces network traffic and the storage requirements on each machine.
{\em Load balancing} across all machines means that all machines finish at approximately the same time, thus minimizing the time taken for the overall ATAC computation.

MapReduce frameworks are widely used for distributed processing of large data sets. There are mature implementations of MapReduce frameworks optimised for a variety of use cases \cite{hadoop,spark}. 
However, problems with the ATAC pattern do not fit the MapReduce pattern\cite{zhang2016data}. 
A new compututational framework for distributing data and scheduling computations for  ATAC problems is needed.

\begin{defi}\cite{zhang2016data, zhang2017scalable}
An {\em ATAC} data distribution has every pair of data items together on some machine. %and every machine has the same number of data items.
\end{defi}

%\begin{problem}{\em Distribute data across a network so that each comparison computation has data locality and  each machine has an equal share of the data whilst minimising data replication.}
%\end{problem}

The ATAC data distribution problem was first formally described using graphs and incidence matrices \cite{zhang2016data, zhang2017scalable}. Heuristic approaches \cite{zhang2016data, zhang2017scalable}, tabu search \cite{deng2021solution} and mixed integer linear programming \cite{li2019construction}, have been used to design ATAC data distributions. In this work, we use the tools of discrete geometry and combinatorial designs to design good ATAC data distributions. 

Let $m$ be the number of machines available for computation, and let $x$ be the number of groups that the data has been partitioned into. A data distribution can be expressed as an $m\times x$ incidence matrix $D$, and a column vector $\vec{s}$ of length $x$.   All entries in $\vec{s}$ are non-negative and sum to $1$.
The fraction of all data on each machine may be represented by a column vector $\vec{f}$ of length $m$ and computed by the matrix vector equation 
\begin{equation}
D  \vec{s}=\vec{f}.
\end{equation}

A good ATAC data distribution minimises the amount of data replication and is load balanced so that each machine finishes at the same time.  The quality of a data distribution can be measured by the  data storage limit $L(D,\vec{s})$, which is the maximum fraction of the data stored on any machine.

\begin{defi}\label{def:dist}
Let $(D,\vec{s})$ be a data distribution on $m$ machines, and let $\vec{f}=(f_1,f_2,\dots,f_m)^T=D\vec{s}$, then the {\em data storage limit} $L(D,\vec{s})$ is  
\[L(D,\vec{s})=max_{1\leq i\leq m}(f_i).\]
\end{defi}
The data distribution shown in Figure \ref{figc} can be represented as the matrix vector pair $(D,\vec{s})$ where
\begin{equation}
D\vec{s}=\left(\begin{matrix}
1 & 1 & 0 \\
1 & 0 & 1\\
0 & 1 & 1\\
\end{matrix}\right) 
\left(\begin{matrix} 
\nicefrac{1}{3}\\
\nicefrac{1}{3}\\
\nicefrac{1}{3}\\
\end{matrix}\right)= \left(\begin{matrix} \nicefrac{2}{3}\\
\nicefrac{2}{3}\\
\nicefrac{2}{3}
\end{matrix}\right).
\end{equation}
The data limit of the Figure \ref{figc}  data distribution is $L(D,\vec{s})=\nicefrac{2}{3}$, which we will show is minimal for a cluster of 3 machines (Theorem \ref{thm:bound}).

Our goal is to design  ATAC data distributions which have minimum possible  data storage limit. The load balancing aspect of an ATAC data distribution will be considered in section \ref{LoadBalance}.

\begin{defi}
Let $m$ be the number of machines available for an ATAC data distribution.  The {\em minimal data storage limit}, $L_{min}(m)$, is the minimum data storage limit that can be achieved with a data distribution that uses $m$ machines.
\end{defi}

In the case of a MapReduce data distribution,  it is easy to determine the minimum data storage limit: the data set is partitioned evenly,  for a cluster of  $m$ machines, each machine receives $\nicefrac{1}{m}$ of the data.
For an ATAC data distribution, the minimum data storage limit is more complex.

This paper demonstrates a theoretical lower bound on the ATAC data storage limit.  By using the geometric structures of {\em projective planes}, {\em affine planes}, and {\em balance incomplete block designs} this paper finds ATAC designs with minimum data storage limit for some specific compute cluster sizes.

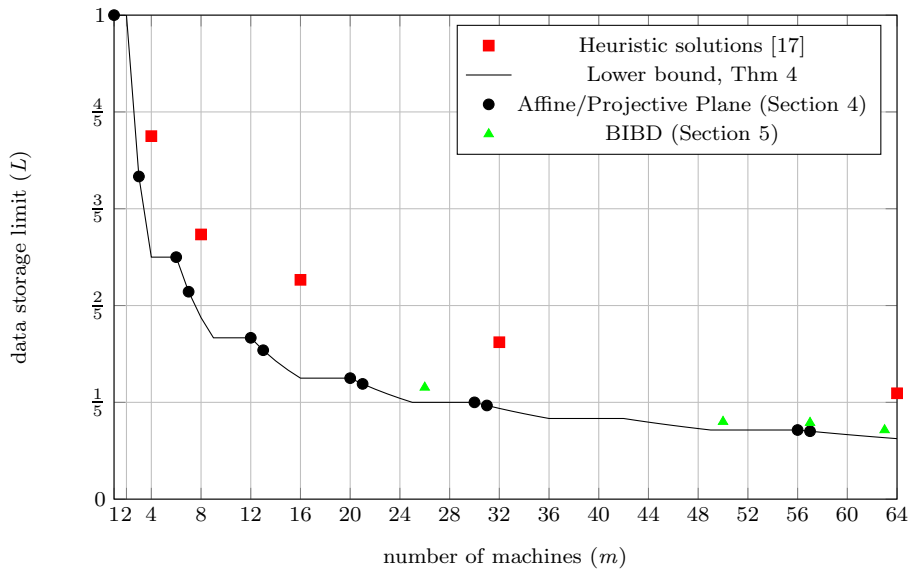
\begin{figure*}[htb!]
\begin{tikzpicture}
\begin{axis}[xlabel=number of machines (\textit{m}), ylabel=data storage limit (\textit{L}), grid=major, height=8cm, width=\textwidth, yticklabel style={/pgf/number format/frac}, xmin = 1, xmax=64, ymin=0, ymax=1, xtick={1,2,4,8,12,16,20,24,28,32,36,40,44,48,52,56,60,64}]

\addplot[only marks,color=red,mark=square*,domain=1:64] coordinates { (4,192/256) (8,140/256) (16,116/256) (32, 83/256) (64, 56/256)};
\addlegendentry{Heuristic solutions \cite{zhang2017scalable}};

\addplot[color=black,domain=1:64] coordinates {(1,1) (2,1) (3,2/3) (4,1/2) (5,1/2) (6,1/2) (7,3/7) (8,3/8) (9,1/3) (10,1/3) (11,1/3) (12,1/3) (13,4/13) (14,2/7) (15,4/15) (16,1/4) (17,1/4) (18,1/4) (19,1/4) (20,1/4) (21,5/21) (22,5/22) (23,5/23) (24,5/24) (25,1/5) (26,1/5) (27,1/5) (28,1/5) (29,1/5) (30,1/5) (31,6/31) (32,3/16) (33,2/11) (34,3/17) (35,6/35) (36,1/6) (37,1/6) (38,1/6) (39,1/6) (40,1/6) (41,1/6) (42,1/6) (43,7/43) (44,7/44) (45,7/45) (46,7/46) (47,7/47) (48,7/48) (49,1/7) (50,1/7) (51,1/7) (52,1/7) (53,1/7) (54,1/7) (55,1/7) (56,1/7) (57,8/57) (58,8/58) (59,8/59) (60,8/60) (61,8/61) (62,8/62) (63,8/63) (64,1/8) };
\addlegendentry{Lower bound, Thm \ref{thm:bound}}

%\addplot[color=blue,mark=x,domain=1:64] coordinates {(1,1) (2,1) (3,2/3) (4,3/5) (5,5/9) (6,1/2) (7,3/7) (8,5/12) (9,2/5) (10,3/8) (11,5/14) (12,1/3) (13,4/13) };
%\addlegendentry{Optimal for small \textit{m} (Section \ref{SmallM})}

\addplot[only marks,color=black,domain=1:64] coordinates {(1,1) (3,2/3) (6,1/2) (7,3/7) (12,1/3) (13,4/13) (20,1/4) (21,5/21) (30,1/5) (31,6/31) (56,1/7) (57,8/57) };
\addlegendentry{Affine/Projective Plane (Section \ref{FPP})}

\addplot[only marks,color=green,mark=triangle*, domain=1:64] coordinates {(26,3/13) (50,4/25) (57,3/19) (63,4/28)  };
\addlegendentry{BIBD (Section \ref{sec:BIBD})}

%\addplot[color=green,domain=1:64] coordinates { (2,1) (3,2/3) (4,0.6) (5,0.555556) (6,1/2) (7,3/7) (8,0.416667) (9,0.4) (10,0.382353) (11,0.363636) (12,1/3) (13,4/13) (14,0.304348)  (15,0.300000)  (16,0.294574)  (17,0.285464)  (18,0.278195)  (19,0.273574) (20,1/4) (21,5/21) (22,0.236842)  (23,0.235294)  (24,0.230769)  (25,0.227642)  (26,0.225248)  (27,0.222222)  (28,0.218525)  (29,0.215848) (30,1/5) (31,6/31) (32,0.192982) (33,0.192308) (34,0.190476) (35,0.188443) (36,0.186678) (37,0.185705) (38,0.184730) (39,0.182688)  (40, 0.180780) (41,0.179802) (42,0.178502) (43,0.176561) (44,0.175439) (45,0.174030) (46,0.172615) (47,0.171492) (48,0.170489) (49,0.169148) (50,0.167932) (51,0.166558) (52,0.165552) (53,0.164608) (54,0.163748) (55,0.162732)  (56,1/7) (57,8/57) (58,0.140187) (59,0.140000) (60,0.139541) (61, 0.139104) %(62, 0.2) (63, 0.2) (64, 0.2)} ;
%\addlegendentry{Extension Heuristic (Section \ref{Extension})}

\end{axis}
\end{tikzpicture}
\label{graph}
\caption{Data per machine vs Number of machines\label{Graph1}.  The red squares are data distributions calculated in existing work~\cite{zhang2017scalable}. The black line is the theoretical lower bound achieved in this paper. The black dots are data distributions constructed from affine and projective planes, see section \ref{FPP}. The green triangles are data distributions constructed from balanced incomplete block designs, see section \ref{sec:BIBD}. %The blue crosses are optimal data distributions found using exhaustive computation, see section \ref{SmallM}.  
Compared with existing work~\cite{zhang2017scalable}, the new data distributions from this paper show significantly better data storage limit.} 
\end{figure*}

Figure~\ref{Graph1} provides an overview of this paper's results by summarizing the data storage limits achieved by the techniques discussed.
%The remainder of this paper is organized as follows. 

Section \ref{Relatedwork} reviews related work, including earlier heuristic approaches. 
The red squares in Figure~\ref{Graph1} show the heuristic solutions \cite{zhang2017scalable}. 
Section~\ref{Bound} formalizes the ATAC problem in terms of matrix-vector equations and demonstrates a lower bound on the data storage limit, illustrated by black line in Figure~\ref{Graph1}).

Section~\ref{FPP} constructs ATAC data distributions using finite projective planes and affine planes which have optimal data storage limit, illustrated by the black dots in Figure~\ref{Graph1}. Section \ref{sec:BIBD} explores some balanced incomplete block Designs to construct good ATAC data distributions, illustrated by green triangles in Figure \ref{Graph1}. %Section \ref{SmallM} uses computational search methods for ATAC data distributions for small numbers of  machines,
% illustrated by blue crosses in Figure \ref{Graph1}. 
%Section \ref{Extension} explores how optimal data distributions can be used as a basis for efficiently generating quality heuristic solutions for values between the known optimal cases.
%These are illustrated by the green line in Figure \ref{Graph1}.
Load balancing and fault tolerance are discussed in Section~\ref{LoadBalance}. 
Section \ref{sec:conclusion} concludes the paper. 
\section{Background}\label{Relatedwork}
A {\em comparison function} takes two inputs.  In the All-to-All Comparison pattern, the comparison function is applied independently to each {\em pair} of data items; the results are then combined into a similarity matrix.
The  MapReduce framework, as implemented in Hadoop and Spark, has a {\em map function} that takes one input, and a {\em reduce}  function that combines the mapped data into a summarized form.  The key difference between Comparison and the MapReduce framework is that the map function in MapReduce takes only one item of input whereas  the compare function in ATAC takes a pair of input items.
This difference together with the desire to perform tasks where the data already exists makes efficient data distribution for an ATAC computation profoundly more challenging and interesting.

The use of discrete geometry in  distributed computing problems is not new \cite{suen1992efficient, kashefi2017rp2}.  However this is the first applied to ATAC problems.

The ATAC problem has been investigated for a few years, with the problem formalised in 2016 \cite{zhang2016data,zhang2017scalable}.  There have been some recent innovations  to finding good ATAC data distributions \cite{deng2021solution, li2019construction}, however 
there remains challenging issues and no general theory. 
%It is still an open problem whether or not finding an ATAC data distribution with minimum data limit is NP-complete. 
%Similar problems have been shown to  be NP-complete, for example, completion of Balanced Incomplete Block Designs \cite{Kaski2006} and  embedding of Steiner Triple systems \cite{Colbourn1983}.  
%Other similar problems such as isomorphism of line graphs \cite{Huber2011} and construction of Steiner Triple Systems  \cite{gibbons2007} are not NP-complete. 
In Section \ref{FPP}, ATAC data distributions with minimal data limit are constructed for an infinite family of special cases.%Investigating the NP-completeness of the general case remains as future work.

In existing work~\cite{zhang2016data,zhang2017scalable, deng2021solution}, it was assumed that calculating an optimal data distribution for the all-to-all comparison pattern is a hard problem and so heuristic solutions were used.
The red squares in Figure \ref{Graph1} show the heuristic solutions obtained in \cite{zhang2017scalable}. 
By comparison, this paper demonstrates a theoretical lower bound on the data limit, and uses geometric and combinatorial structures to create ATAC data distributions that meet the lower bound.  The ATAC distributions constructed in this paper have  lower data limit than heuristic solutions, however are only constructable for specific compute cluster sizes.

An ATAC design is an \emph{pair covering} \cite{chee2013covering} with the relaxation that block size is not assumed to be constant across the entire design. Making ATAC a new twist on a well studied problem.

The ATAC problem can be formalised into an objective function with multiple constraints which can be used for mixed integer linear programming \cite{li2019construction}.  In Section \ref{Bound} matrix vector equations are used to  formalise the ATAC problem, and a lower bound for the objective function is demonstrated.

In the ATAC pattern under consideration in this paper, the machines in the compute cluster are assumed to be homogeneous, whereas previous work  has considered  heterogeneous systems~\cite{zhang2016data}.
This paper constructs  data distributions with minimal data limit first, and then considers computational load balancing, whereas existing work has considered data distribution and load balancing simultaneously \cite{zhang2016data,zhang2017scalable}.  The advantage of using highly regular geometric  and combinatorial structures, is that load balancing is always possible. 
\section{Data Locality and Storage Limit}
\label{Bound}
%\input{VennDiagram.tex}
%%All-to-All Comparison problems are a challenge with large datasets, so we assume that the data set is very large.
%Rather than working with data items individually, we group the data into equivalence classes called {\em data groups}.  
%A {\em data group} is defined as a set of data items that are all present on the  same set of machines.

A data distribution can be expressed as the pair $(D, \vec{s})$.  For  $m$ machines, and $x$ data groups, the matrix $D=(d_{ij})\in\mathbb{R}^{m\times x}$ has $m$ rows (representing machines) and  $x$ columns (representing data groups), with $d_{ij}=1$ when data group $j$ is present on machine $i$, and $d_{ij}=0$ otherwise.  
The vector $\vec{s}=(s_i)\in\mathbb{R}^{x}$ represents the partitioning of the data, where $s_i$ is the fraction of all data in data group $i$, and $\sum_{i=1}^{x}s_i=1$.

The fraction of all data on each machine is represented by a vector $\vec{f}\in \mathbb{R}^m$ computed by the matrix vector equation 
\begin{equation}
D  \vec{s}=\vec{f}.
\end{equation}
The data locality requirement stated for data groups is:
\begin{quote}
\emph{For every pair of data groups there exists at least one machine that holds both groups}.
\end{quote}
The data locality requirement stated in matrix terms is:
\begin{quote}
\emph{The matrix $A=D^TD$ contains all non-zero entries.} 
\end{quote}  

%With $m$ machines, there are $2^m$ possible sets of machines, and hence $2^m$ possible data groups.  In practice, many of the data groups are empty, so only the the all zero columns of $D$ and zero entries of $\vec{s}$ are omitted. 
  
An advantage of using data groups over individual data items is that the optimal data distribution depends only on the size of the compute cluster ($m$) 
and not on the number of items in the data set. 
When defining the size of each data group we use the fraction of all data items that are in that group.
This fraction is a rational number, however, the actual number of data items in each data group must be an integer.
Assuming that the number of data items is sufficiently large, then  rounding the number of items in each data group to the nearest integer makes a negligible difference.

The quality of the data distribution is measured by the {\em data storage limit} $L$, which is determined by the machine that is allocated the most data:
\begin{equation}
L(D,\vec{s}) = \max (D \vec{s})
\end{equation}
It is most useful to describe the data storage limit, $L$, in terms of a fraction of the whole data set.

The challenge is: given $m$, with $x$ unknown, find a zero-one $m \times x$ matrix $D$ and a vector $\vec{s}$ of length $x$ such that:
\begin{itemize}
\item[1)]  $D^TD$ has all nonzero entries,
\item[2)]  the entries of $\vec{s}$ sum to $1$, and
\item[3)] $L(D,\vec{s})$ is minimized.
\end{itemize}

The minimal data storage limit for a cluster of $m$ machines is denoted $L_{min}(m)$.  We establish a lower bound on the data storage limit for a given number of machines. 

\begin{thm}\label{thm:bound}
A lower bound on the data storage limit for an ATAC design using $m$ machines is given by 
\begin{equation}
 \min\left(\frac{\left\lfloor\sqrt{m}\right\rfloor+1}{m},\frac{1}{\lfloor\sqrt{m}\rfloor}\right)\le L_{min}(m)
\label{general}
\end{equation}
\end{thm}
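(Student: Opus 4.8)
The plan is to recast the problem as a statement about an intersecting set system and argue by a size dichotomy. Writing $S_j = \{i : d_{ij} = 1\}$ for the support of data group $j$, the entry $(D^T D)_{jj'}$ equals $|S_j \cap S_{j'}|$, so the requirement that $A = D^TD$ have all nonzero entries is precisely that the supports $S_1,\dots,S_x$ are nonempty and pairwise intersecting. The load on machine $i$ is $f_i = \sum_{j : i \in S_j} s_j$, and I want to bound $L = \max_i f_i$ from below over all admissible $(D,\vec{s})$. Throughout I set $k = \lfloor\sqrt{m}\rfloor$ and let $r = \min_j |S_j|$, fixing a group $j_0$ whose support attains this minimum size.

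The first case handles small supports, $r \le k$. Here I use that every $S_j$ meets $S_{j_0}$: summing loads only over the machines in $S_{j_0}$ counts each weight at least once, so $\sum_{i \in S_{j_0}} f_i = \sum_j |S_j \cap S_{j_0}|\, s_j \ge \sum_j s_j = 1$, using $s_j \ge 0$ and $|S_j \cap S_{j_0}| \ge 1$. Since $|S_{j_0}| = r$, averaging forces some machine in $S_{j_0}$ to carry load at least $1/r$, hence $L \ge 1/r \ge 1/k$. The complementary case is $r \ge k+1$, where every support has size at least $k+1$. Then I total the loads over all machines, $\sum_{i=1}^m f_i = \sum_j |S_j|\, s_j \ge (k+1)\sum_j s_j = k+1$, while trivially $\sum_i f_i \le mL$, giving $L \ge (k+1)/m$. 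Combining the two mutually exhaustive cases, every admissible distribution satisfies $L \ge 1/k$ or $L \ge (k+1)/m$, so $L_{min}(m) \ge \min(1/k, (k+1)/m)$, which is the claim.

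Conceptually the proof is a clean threshold argument: small supports force load through an intersection-covering inequality, large supports force it through plain averaging, and the two mechanisms balance near $|S_j| \approx \sqrt{m}$, which is why $\lfloor\sqrt{m}\rfloor$ appears. I do not anticipate a serious obstacle, since both estimates are elementary once the set-system reformulation is in place. The one point requiring a little care is the covering inequality $\sum_{i \in S_{j_0}} f_i \ge 1$: a group whose support meets $S_{j_0}$ in several machines is overcounted, but because overcounting only increases the left-hand side the bound still holds in the direction I need. A final bookkeeping check is that neither case produces something weaker than $\min(1/k,(k+1)/m)$, which is immediate since each case dominates one of the two terms of the minimum.
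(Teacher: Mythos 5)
Your proof is correct and uses essentially the same dichotomy as the paper: its Lemma~\ref{lem:bound} splits on whether some data item lives on at most $b$ machines (covering/averaging over that small set) or all items live on at least $b+1$ machines (global averaging), exactly mirroring your two cases on $r=\min_j|S_j|$. The only difference is cosmetic: you instantiate directly at $b=\lfloor\sqrt{m}\rfloor$ and phrase things in terms of weighted data groups, which lets you bypass the paper's separate optimization step (Lemma~\ref{lem:equiv}) showing that $b=\lfloor\sqrt{m}\rfloor$ maximizes $\min\bigl(\frac{b+1}{m},\frac{1}{b}\bigr)$.
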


The proof of this theorem  follows directly from Lemmas \ref{lem:bound} and \ref{lem:equiv}.

%Two lemmas are given below before the statement of the main result . 

\begin{lem}\label{lem:bound}
A bound on the data storage limit for and ATAC design using $m$ machines is given by
\begin{equation}\label{eqn:b}
\max_{b \in [1 .. m]}\left( \min\left(\frac{b+1}{m},\frac{1}{b}\right)\right) \le L_{min}(m)
%\min\left(\frac{b+1}{m},\frac{1}{b}\right) \le L(m),\quad \forall b \in [1 .. m]
\end{equation}
\end{lem}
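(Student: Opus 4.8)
The plan is to prove the seemingly stronger pointwise statement: for each fixed integer $b \in [1..m]$ and every valid data distribution $(D,\vec s)$, one has $L(D,\vec s) \ge \min\!\big(\tfrac{b+1}{m}, \tfrac{1}{b}\big)$. Since $L_{min}(m)$ is by definition the minimum of $L(D,\vec s)$ over all valid distributions, this gives $\min\!\big(\tfrac{b+1}{m}, \tfrac{1}{b}\big) \le L_{min}(m)$ for every $b$, and taking the maximum over $b \in [1..m]$ yields \eqref{eqn:b}. For a fixed distribution write $M_j = \{\, i : d_{ij} = 1\,\}$ for the set of machines holding data group $j$, so that $|M_j|$ is its replication count and $\vec f = D\vec s$ has entries $f_i = \sum_{j : d_{ij}=1} s_j$. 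The argument is a dichotomy on the smallest replication count relative to the threshold $b$.

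In \emph{Case A}, some data group $j^\star$ satisfies $|M_{j^\star}| \le b$. Here I would exploit the data-locality condition that $D^TD$ has all nonzero entries, which means every data group shares a machine with $j^\star$ and hence lies on at least one machine of $M_{j^\star}$. Summing the loads over these machines therefore counts the weight of every group at least once:
\[ \sum_{i \in M_{j^\star}} f_i \;=\; \sum_j s_j\,\big|\{\,i \in M_{j^\star} : d_{ij}=1\,\}\big| \;\ge\; \sum_j s_j \;=\; 1. \]
Averaging over the at most $b$ machines of $M_{j^\star}$ then gives $\max_i f_i \ge 1/|M_{j^\star}| \ge 1/b$.

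In \emph{Case B}, every data group satisfies $|M_j| \ge b+1$. Here I would double-count the total load across the whole cluster,
\[ \sum_{i=1}^m f_i \;=\; \sum_j s_j\,|M_j| \;\ge\; (b+1)\sum_j s_j \;=\; b+1, \]
so the fullest machine carries $\max_i f_i \ge \tfrac{1}{m}\sum_i f_i \ge (b+1)/m$. In either case $L(D,\vec s) = \max_i f_i \ge \min\!\big(\tfrac{b+1}{m}, \tfrac{1}{b}\big)$, which establishes the pointwise claim.

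The only real subtlety, and the step I would check most carefully, is the counting in Case A: one must confirm that the data-locality requirement forces every column of $D$ to have a nonzero entry in some row indexed by $M_{j^\star}$, so that no group's weight $s_j$ is dropped from the sum — this is exactly the content of $A=D^TD$ having all nonzero entries, including the diagonal $A_{jj}=|M_j|>0$. The remainder is elementary averaging, and the crossover at $b \approx \sqrt m$, where the two bounds $\tfrac{b+1}{m}$ and $\tfrac1b$ meet, is what later produces the $\lfloor\sqrt m\rfloor$ appearing in Theorem \ref{thm:bound}.
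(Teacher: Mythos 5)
Your proposal is correct and is essentially the paper's own argument: the same dichotomy on whether the least-replicated piece of data sits on at most $b$ or at least $b+1$ machines, followed by the same two averaging computations. You phrase it in the incidence-matrix/data-group formalism and make the double-counting explicit, whereas the paper argues informally in terms of data items, but the underlying reasoning is identical.
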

\begin{proof}
For an arbitrary number $b \in [1,..,m]$, consider two alternative cases.
\paragraph{Case 1:} There exists at least one data item that is present on at most $b$ machines. 
That data item needs to be compared with all other data items on one of those $b$ machines, so these $b$ machines collectively hold all data items. 
The average fraction of all data that these $b$ machines hold is at least  $\frac{1}{b}$, so at least one of those machines will hold at least $\frac{1}{b}$ of the data.  
Therefore
\[
\frac{1}{b} \le L_{min}(m).
\]

\paragraph{Case 2:} All data items are present on at least $b+1$ machines. 
For $n$ data items, the total number of data items distributed is at least $(b+1)n$. 
Thus the average data items per machine is $(b+1)n/m$.
Therefore at least one machine  holds at least $(b+1)/m$ of the $n$ data items, therefore
\[
\frac{b+1}{m} \le L_{min}(m).
\]
Combining the two alternative cases:
\[
 \frac{b+1}{m} \le L_{min}(m) \; \lor \; \frac{1}{b} \le L_{min}(m) 
\]
so:
\[
\min\left(\frac{b+1}{m},\frac{1}{b}\right) \le L_{min}(m),\quad \forall b \in [1 .. m]
\]
%this completes the proof. 
as required.  \qed
\end{proof}

%The next Lemma simplifies the left side of equation (\ref{eqn:b}).

\begin{lem}\label{lem:equiv}
Let $m\in\mathbb{N}$ and let 
$f_m(b) = \min\left(\frac{b+1}{m},\frac{1}{b}\right)$then
\[
\max_{b \in [1..m]} f_m(b) = f_m\left(\left\lfloor\sqrt{m}\right\rfloor\right).
\]
\end{lem}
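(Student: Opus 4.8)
The plan is to exploit the structure $f_m(b)=\min(g(b),h(b))$, where $g(b)=\frac{b+1}{m}$ is increasing in $b$ and $h(b)=\frac{1}{b}$ is decreasing in $b$. The graph of $f_m$ therefore rises and then falls, with its peak near the crossover $b(b+1)=m$, i.e.\ near $b\approx\sqrt{m}$. Rather than locating that crossover exactly, I would show directly that $k:=\lfloor\sqrt{m}\rfloor$ already sits at the peak, by proving $f_m(b)\le f_m(k)$ for every integer $b\in[1..m]$. Throughout I would use only the two defining inequalities of the floor, namely $k^2\le m$ and $m<(k+1)^2$.

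First I would record the monotonicity and the trivial bounds $f_m\le g$ and $f_m\le h$, which let me control $f_m(b)$ on each side of $k$ by whichever factor is convenient. For $b\le k-1$, monotonicity of $g$ gives $f_m(b)\le g(b)\le g(k-1)=\frac{k}{m}$. For $b\ge k+1$, monotonicity of $h$ gives $f_m(b)\le h(b)\le h(k+1)=\frac{1}{k+1}$. The case $b=k$ is equality. So the whole statement reduces to two inequality checks.

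The crux is to show that each of these two bounds is itself at most $f_m(k)=\min\!\left(\frac{k+1}{m},\frac{1}{k}\right)$, and this is exactly where the floor inequalities enter. On the left, $\frac{k}{m}\le\frac{1}{k}$ is equivalent to $k^2\le m$ (true by definition of $k$), while $\frac{k}{m}<\frac{k+1}{m}$ is immediate; lying below both arguments of the minimum, $\frac{k}{m}\le f_m(k)$. Symmetrically, on the right, $\frac{1}{k+1}\le\frac{k+1}{m}$ is equivalent to $m\le(k+1)^2$ (true since $m<(k+1)^2$), while $\frac{1}{k+1}<\frac{1}{k}$ is immediate, so $\frac{1}{k+1}\le f_m(k)$. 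Combining the two sides yields $f_m(b)\le f_m(k)$ for all $b$, and hence $\max_{b\in[1..m]}f_m(b)=f_m(k)$.

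I expect the only genuine subtlety to be organisational rather than computational. The value $f_m(k)$ switches between its two branches $\frac{k+1}{m}$ and $\frac{1}{k}$ according to where $m$ falls in the window $[k^2,(k+1)^2)$, and a case split on the active branch is tempting but clutters the argument. The key device that avoids it is to never decide which branch is active: on each side I bound $f_m(b)$ by a quantity ($\frac{k}{m}$ on the left, $\frac{1}{k+1}$ on the right) that is simultaneously at most \emph{both} arguments of the minimum, so it is automatically at most $f_m(k)$. This reduces the proof to two short checks, each consuming exactly one of the two floor inequalities, with the empty ranges when $k=1$ handled automatically.
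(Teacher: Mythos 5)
Your proof is correct, but it takes a genuinely different route from the paper's. The paper fixes the irrational benchmark value $\tfrac{1}{\sqrt{m}}$ and runs a four-way case split on where the real number $b$ sits relative to $\sqrt{m}$ and $\sqrt{m}-1$, showing $f_m(b)<\tfrac{1}{\sqrt{m}}$ outside the window $[\sqrt{m}-1,\sqrt{m}]$ and $f_m(b)\ge\tfrac{1}{\sqrt{m}}$ inside it, and then finishes with a separate argument (split on whether $\sqrt{m}$ is a perfect square) about which integers land in which range. You instead compare $f_m(b)$ directly to $f_m(k)$ with $k=\lfloor\sqrt{m}\rfloor$: monotonicity of the two branches collapses everything left of $k$ under $\tfrac{k}{m}$ and everything right of $k$ under $\tfrac{1}{k+1}$, and each of these is shown to lie below \emph{both} arguments of the minimum defining $f_m(k)$, consuming exactly one floor inequality ($k^2\le m$ on the left, $m<(k+1)^2$ on the right). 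What your version buys is the elimination of all irrational arithmetic, of the perfect-square case split, and of the need to locate the crossover of the two branches; it also sidesteps the slightly garbled endgame in the paper's write-up (where the final paragraph refers to ``case 4'' in a place where ``case 3'' is evidently intended). What the paper's version buys is an explicit handle on the \emph{value} of the maximum, namely that it is essentially $\tfrac{1}{\sqrt{m}}$, which is useful intuition for the asymptotics of the storage bound even though it is not required by the lemma statement.
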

\begin{proof}

Since $\left\lfloor\sqrt{m}\right\rfloor \in [1..m]$, it remains only to show that $f_m$ has its maximum value at $\left\lfloor\sqrt{m}\right\rfloor$. We consider $b$ within four ranges:
\paragraph{Case 1:}
Let $b=\sqrt{m}$ or $b=\sqrt{m}-1$, then $f_m(b)=\frac{1}{\sqrt{m}}$.

\paragraph{Case 2:}
Let $b<\sqrt{m}-1$, then $b=\sqrt{m}-1-\alpha$ for some $\alpha\in \mathbb{R}$ with $\alpha>0$.  
$$\frac{b+1}{m} = \frac{\sqrt{m}-1-\alpha+1}{m} = \frac{1}{\sqrt{m}}-\frac{\alpha}{m} < \frac{1}{\sqrt{m}}$$
and
$$\frac{1}{b} =\frac{1}{\sqrt{m}-1-\alpha} > \frac{1}{\sqrt{m}}$$
$$\frac{b+1}{m} < \frac{1}{b} \textnormal{ , so } f_m(b) = \frac{b+1}{m} < \frac{1}{\sqrt{m}}$$

\paragraph{Case 3:}  
Let  $\sqrt{m}-1 < b < \sqrt{m}$, 
then  $b=\sqrt{m}-\gamma$, with $\gamma\in\mathbb{R}$ with $0 < \gamma < 1$.  
\[\frac{1}{b}=\frac{1}{\sqrt{m}-\gamma}>\frac{1}{\sqrt{m}},\]
and 
\[\frac{b+1}{m}=\frac{\sqrt{m}-\gamma+1}{m}>\frac{1}{\sqrt{m}}.\]
So when $\sqrt{m}-1 < b < \sqrt{m}$ we have $f_m(b) > \frac{1}{\sqrt{m}}$.

\paragraph{Case 4:} 
Let $b>\sqrt{m}$, then $b=\sqrt{m}+\beta$ for some $\beta\in\mathbb{R}$ with $\beta>0$.
$$\frac{b+1}{m} = \frac{\sqrt{m}+\beta+1}{m}  = \frac{1}{\sqrt{m}}+\frac{\beta+1}{m} > \frac{1}{\sqrt{m}}$$
and
$$\frac{1}{b} =\frac{1}{\sqrt{m}+\beta} <\frac{1}{\sqrt{m}}$$
$$\frac{1}{b} < \frac{b+1}{m} \textnormal{ , so } f_m(b) = \frac{1}{b} < \frac{1}{\sqrt{m}}$$

If $\sqrt{m}$ is an integer, then there are no integer values of $b$ covered by case 4, so the next best (case 3) indicates that $f_m$  takes its maximum value at $\sqrt{m} = \left\lfloor\sqrt{m}\right\rfloor$ (and also at $\sqrt{m}-1$). 
If $\sqrt{m}$ is not an integer, then the maximum value of $f_m$ is covered by case 4 and the only integer value of $b$ within that range is $\left\lfloor\sqrt{m}\right\rfloor$.
\qed
\end{proof}

The lower bound on the data storage limit of Theorem \ref{thm:bound} is plotted in black in Figure \ref{Graph1}.
In Section \ref{FPP} data distributions are constructed which   demonstrate that the bound of Theorem \ref{thm:bound} is tight for two families of solutions.

\section{ Projective  and Affine Planes}
\label{FPP}

In this section we use the geometric structures of projective and affine planes  to generate  ATAC data distributions with minimal data limit.

\begin{defi}\label{defi:PP}\cite{storme2007finite} A {\em finite  projective plane}  of order $n$ is a set of $m$ points and $m$ lines (where $m=n^2+n+1$) with properties which include:
 \begin{enumerate}
 \item{\label{PP1}Every pair of points has exactly one line passing through both points,}
 \item{Every pair of lines intersects at exactly one of the points,}
 \item{Every line contains $n+1$ points, and}
 \item{Every point is on $n+1$ lines.} 
\end{enumerate}
\end{defi}
An incidence matrix has rows which represent the lines, and columns which represent the points.  The cell in row $i$, column $j$ has value $1$ if line $i$ contains point $j$, all other cells have value $0$.

\begin{thm} \label{thm:PG}
Let $D$ be the incidence matrix of a projective plane of order $n$, 
let $m=n^2+n+1$, and $\vec{s}=(\nicefrac{1}{m}, \nicefrac{1}{m}, \cdots, \nicefrac{1}{m})^T\in\mathbb{R}^m$. Then, $(D,\vec{s})$ is an ATAC data distribution on $m$ machines with minimal data limit.
\end{thm}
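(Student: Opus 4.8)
The plan is to verify directly that $(D,\vec{s})$ satisfies the definition of an ATAC data distribution, to compute its data storage limit by an explicit count, and then to show that this value equals the lower bound of Theorem~\ref{thm:bound}, so that the limit is minimal. Throughout I adopt the convention that the rows of $D$ (the machines) correspond to the \emph{lines} of the projective plane and the columns (the data groups) correspond to the \emph{points}, so that $D$ is the $m\times m$ point-line incidence matrix with $m=n^2+n+1$.

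First I would establish data locality, namely that $A=D^TD$ is entrywise nonzero. The $(j,k)$ entry of $D^TD$ equals $\sum_i D_{ij}D_{ik}$, which counts the lines passing through both point $j$ and point $k$. For $j\neq k$, property~\ref{PP1} of Definition~\ref{defi:PP} (every pair of points determines exactly one common line) forces this entry to be $1$; for $j=k$ the entry equals the number of lines through point $j$, which is $n+1$ since every point lies on $n+1$ lines. In either case the entry is at least $1$, so $A$ is entrywise positive and data locality holds. That $\vec{s}$ sums to $1$ is immediate, as it has $m$ entries each equal to $\tfrac{1}{m}$. I would then compute the load vector $\vec{f}=D\vec{s}$: its $i$-th coordinate is $f_i=\tfrac{1}{m}\sum_j D_{ij}$, namely $\tfrac{1}{m}$ times the number of points on line $i$. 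Since every line contains exactly $n+1$ points, each coordinate equals $\tfrac{n+1}{m}$, so $L(D,\vec{s})=\max_i f_i=\tfrac{n+1}{m}$.

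It then remains to show this value is minimal. I would first establish $\lfloor\sqrt{m}\rfloor=n$: from $n^2<n^2+n+1<(n+1)^2$ for $n\ge 1$ we obtain $n<\sqrt{m}<n+1$. Substituting $\lfloor\sqrt{m}\rfloor=n$ into Theorem~\ref{thm:bound} yields the lower bound $\min\!\left(\tfrac{n+1}{m},\tfrac{1}{n}\right)$. Comparing the two arguments, $\tfrac{n+1}{m}\le\tfrac{1}{n}$ is equivalent to $n(n+1)\le m$, i.e.\ $n^2+n\le n^2+n+1$, which holds, so the active branch is $\tfrac{n+1}{m}$. Hence $L_{min}(m)\ge\tfrac{n+1}{m}=L(D,\vec{s})$, and since the constructed distribution attains this bound, its data limit is minimal.

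I expect the only genuinely delicate step to be this minimality argument: correctly pinning down $\lfloor\sqrt{m}\rfloor=n$ for the specific value $m=n^2+n+1$, and identifying which branch of the $\min$ in Theorem~\ref{thm:bound} governs the bound. By contrast, the verification that $(D,\vec{s})$ is a valid ATAC distribution is a direct translation of the projective-plane axioms into matrix language, and the load computation is a one-line count that relies only on the constant line size $n+1$ and the uniformity of $\vec{s}$.
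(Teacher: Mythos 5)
Your proposal is correct and follows essentially the same route as the paper's proof: identify points with data groups and lines with machines, invoke the pairwise-intersection axiom for locality, compute $L=\tfrac{n+1}{m}$ from the constant line size, and match this against Theorem~\ref{thm:bound} after noting $\lfloor\sqrt{m}\rfloor=n$ and that the $\tfrac{n+1}{m}$ branch of the $\min$ is active. Your version merely spells out a few steps the paper leaves implicit (the entrywise computation of $D^TD$ and the inequality $n^2<m<(n+1)^2$), which is fine.
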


\begin{proof}
View the points of the projective plane as data groups and the lines as machines.  Property \ref{PP1} of projective planes (Definition \ref{defi:PP}) ensures that the incidence matrix of a projective plane provides a data distribution that satisfies  data locality, and hence is an ATAC design. 
It remains to show that the data distribution has minimal data storage limit.

The vector $\vec{s}$ represents $m=n^2+n+1$ data groups of size $\nicefrac{1}{m}$; and each machine has $n+1$ data groups; 
so the total data on each machine is $L = \frac{n+1}{n^2+n+1}$. 
Since $m=n^2+n+1$ the value of  $\left\lfloor\sqrt{m}\right\rfloor = n$. 
Substituting this into Theorem \ref{thm:bound} gives: 
$$\min\left(\frac{n+1}{n^2+n+1},\frac{1}{n}\right)\le L_{min}(m).$$
As $\frac{n+1}{n^2+n+1} \le \frac{1}{n}$, and therefore $L = \frac{n+1}{n^2+n+1} \le L_{min}(m)$,
 the data distribution constructed from the projective plane has optimal data storage limit. 
\qed 
\end{proof}

Unfortunately, finite projective planes of many orders do not exist. For example: order $n=6$ corresponding to $m=43$ and order $n=10$, corresponding to $m=111$.
The non-existence of a finite projective plane of order $n=10$ was famously demonstrated using thousands of hours of supercomputer time \cite{lam1991search}.  
The non-existence of projective planes of order $6, 14$, and $21$ is due to the much celebrated Bruck-Ryser-Chowla Theorem~\cite{BruckRyser1977}. 
The possible existence of finite projective planes of certain orders, for example $n=12  (m=157)$, $n=18 (m=343)$ and $n=20 (m=421)$  remain unknown.

For $n=p^r$, where $p$ is a prime and $r$ is a positive integer, then a finite projective plane of order $p^r$ exists, denoted $PG(2, p^r)$.   Polynomial arithmetic over the finite field $GF(p^r)$ can be used to construct $PG(2,p^r)$ \cite{storme2007finite}.
Therefore, there exists an infinite family of ATAC data distributions with minimal data limit which can be efficiently constructed.

%\section{Finite Affine Planes}
%\label{APP}

Finite affine planes  are closely related to finite projective planes. 
\begin{defi} \cite{storme2007finite}
A {\em finite affine plane} of order $n$ is a set of $n^2$ points and $n^2+n$ lines with properties which include:
 \begin{enumerate}
 \item{\label{AP1}Every pair of points has exactly one line passing through both points};
 \item{Every line contains $n$ points}; and
 \item{Every point is on $n+1$ lines}.
\end{enumerate}
\end{defi}

\begin{thm} \label{thm:AG}
Let $D$ be the incidence matrix of an affine  plane of order $n$, let $m=n^2+n$, and $\vec{s}=(\nicefrac{1}{n^2}, \nicefrac{1}{n^2}, \cdots, \nicefrac{1}{n^2})^T\in\mathbb{R}^{n^2}$. Then, $(D,\vec{s})$ is an ATAC data distribution  on $m$ machines with minimal data limit.
\end{thm}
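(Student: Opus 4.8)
The plan is to mirror the structure of the projective plane argument in Theorem \ref{thm:PG}, since the affine plane case differs only in its parameters. First I would view the points of the affine plane as data groups and the lines as machines, so that $D$ is the $(n^2+n)\times n^2$ incidence matrix. Property \ref{AP1} of affine planes states that every pair of points lies on exactly one common line; translated to the data distribution, this says every pair of data groups appears together on at least one machine, which is precisely the data locality requirement. Hence $(D,\vec{s})$ is an ATAC design. The remaining work is to compute its data storage limit and compare against the lower bound of Theorem \ref{thm:bound}.

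Next I would compute $L(D,\vec{s})$ directly. The vector $\vec{s}$ partitions the data into $n^2$ groups each of size $\nicefrac{1}{n^2}$, and by the second defining property each line (machine) contains exactly $n$ points (data groups). Therefore every machine carries the same amount of data, namely
\[
L = n\cdot\frac{1}{n^2} = \frac{1}{n}.
\]
The fact that all machines hold equal data is what makes the distribution automatically load balanced, but for the present statement I only need the value of $L$.

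To invoke the lower bound, I must evaluate $\left\lfloor\sqrt{m}\right\rfloor$ for $m=n^2+n$. Since $n^2 < n^2+n < n^2+2n+1 = (n+1)^2$, we have $n < \sqrt{m} < n+1$, so $\left\lfloor\sqrt{m}\right\rfloor = n$. Substituting into the bound of Theorem \ref{thm:bound} yields
\[
\min\left(\frac{n+1}{n^2+n},\frac{1}{n}\right)\le L_{min}(m).
\]
The key simplification is that $\frac{n+1}{n^2+n} = \frac{n+1}{n(n+1)} = \frac{1}{n}$, so both arguments of the minimum coincide and the lower bound is exactly $\nicefrac{1}{n}$. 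Since the construction achieves $L = \nicefrac{1}{n}$, which equals this lower bound, the affine plane data distribution has minimal data storage limit.

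There is no genuine obstacle in this proof; the entire argument is a parameter substitution once the points-as-groups and lines-as-machines correspondence is fixed. The only step requiring a moment's care is confirming $\left\lfloor\sqrt{n^2+n}\right\rfloor = n$, which follows from the strict inequality $n^2+n < (n+1)^2$, and noticing the clean factorisation $n^2+n = n(n+1)$ that collapses both terms of the minimum to $\nicefrac{1}{n}$. Unlike the projective plane case where the two arguments of the minimum are distinct and one must check which is smaller, here they are equal, so the optimality conclusion is immediate.
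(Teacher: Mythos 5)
Your proposal is correct and follows essentially the same route as the paper's own proof: points as data groups, lines as machines, data locality from property \ref{AP1}, computing $L=\nicefrac{1}{n}$, and matching it to the bound of Theorem \ref{thm:bound} via $\left\lfloor\sqrt{n^2+n}\right\rfloor=n$ and the identity $\frac{n+1}{n^2+n}=\frac{1}{n}$. The only difference is that you explicitly justify the floor computation, which the paper asserts without proof.
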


\begin{proof}
View the points of the affine plane as data groups and the lines as machines: property \ref{AP1} of affine planes ensures that the incidence matrix of an affine plane provides a data distribution that satisfies data locality and hence is an ATAC design. 
It remains to show that the data distribution has optimal data storage limit.

The vector $\vec{s}$ represents that each data group is of size $\nicefrac{1}{n^2}$; and each machine has $n$ data groups; 
so the total data on each machine is $L = \nicefrac{1}{n}$. Since
$m=n^2+n$ the value $\left\lfloor\sqrt{m}\right\rfloor = n$. 
Substituting this into Theorem \ref{thm:bound} gives: 
$$\min\left(\frac{n+1}{n^2+n},\frac{1}{n}\right)\le L_{min}(m).$$
As $\frac{n+1}{n^2+n} = \frac{1}{n}$, and  therefore $L = \frac{1}{n} \le L_{min}(m)$,
 the data distribution constructed by an affine plane has optimal data storage limit.
\qed 
\end{proof}

If a finite projective plane of order $n$ exists, then a finite affine plane of order $n$ also exists and can be constructed by removing one line and the points on that line from the projective plane.    Polynomial arithmetic over the finite field $GF(p^r)$ can be used to construct affine planes of prime power order, denoted  $AG(2,p^r)$ \cite{storme2007finite}.

So optimal data distributions are easily constructed for $m=n^2+n+1$ (finite projective plane) or $m=n^2+n$ (finite affine  plane), for any prime power $n=p^r$. 
Table~\ref{tab:pg} shows optimal data distributions constructed by affine and projective planes for up to $1000$ machines.

\begin{table}[htb!]
\begin{center}
\caption{\label{tab:pg} Optimal ATAC data distributions using projective and affine planes for systems up to $993$ machines. }
\begin{tabular}{cccccc}
\toprule
%\multirow{2}{*}{$n$} & \multirow{2}{*}{$p^r$} & \multirow{2}{*}{Existence?} & $m=$ & Projective & Affine\\
\multirow{2}{*}{$n$} & \multirow{2}{*}{$p^r$} & {Existence?} & $m=$ & Projective & Affine\\
& & \cite{storme2007finite,BruckRyser1977} & $n^2+n+1$ & $L_{min}(m)$ & $L_{min}(m-1)$\\ \midrule %\bottomrule
1  & $1^1$   & yes &   3 & $\nicefrac{2}{3} \approx 0.67$    & 1  \\ %\midrule
2  & $2^1$   & yes &   7 & $\nicefrac{3}{7} \approx 0.43$    & $\nicefrac{1}{2} = 0.5$  \\ %\midrule
3  & $3^1$   & yes &  13 & $\nicefrac{4}{13} \approx 0.31$   & $\nicefrac{1}{3} \approx 0.33$  \\ %\midrule
4  & $2^2$   & yes &  21 & $\nicefrac{5}{21} \approx 0.24$  & $\nicefrac{1}{4} = 0.25$  \\ %\midrule
5  & $5^1$   & yes &  31 & $\nicefrac{6}{31} \approx 0.19 $  & $\nicefrac{1}{5} = 0.2$  \\ \midrule
6  &         & no  &  43 &  $\times$      &  $\times$    \\ %\midrule
7  & $7^1$   & yes &  57 & $\nicefrac{7}{57} \approx 0.12$   & $\nicefrac{1}{7} \approx 0.14$  \\ %\midrule
8  & $2^3$   & yes &  73 & $\nicefrac{8}{73} \approx 0.11$   & $\nicefrac{1}{8} \approx 0.13$  \\ %\midrule
9  & $3^2$   & yes &  91 & $\nicefrac{9}{91} \approx 0.099$    & $\nicefrac{1}{9}\approx 0.11$  \\ %\midrule
10 &         & no  & 111 &  $\times$      &  $\times$   \\ \midrule
11 & $11^1$  & yes & 133 & $\nicefrac{11}{133} \approx 0.083$ & $ \nicefrac{1}{11} \approx 0.091$ \\ %\midrule
12 &         & unknown & 157 &  ?      &  ? \\ %\midrule
13 & $13^1$  & yes & 183 & $\nicefrac{13}{183} \approx 0.071$ & $\nicefrac{1}{13} \approx 0.077$ \\ %\midrule
14 &         & no  & 211 &   $\times$     &   $\times$   \\ %\midrule
15 &         & unknown & 241 &   ?     &   ?   \\ \midrule
16 & $2^4$   & yes & 273 & $\nicefrac{16}{273} \approx 0.059$ & $\nicefrac{1}{16} \approx 0.063$ \\ %\midrule
17 & $17^1$  & yes & 307 & $\nicefrac{17}{307} \approx 0.055$ & $\nicefrac{1}{17} \approx 0.059$   \\ %\midrule
18 &         & unknown & 343 &   ?     &  ?   \\ %\midrule
19 & $19^1$  & yes & 381 & $\nicefrac{19}{381} \approx 0.050$ & $\nicefrac{1}{19} \approx  0.053$\\ %\midrule
20 &         & unknown & 421 &   ?     &    ?  \\ \midrule
21 &         & no  & 463 &   $\times$     &  $\times$    \\ %\midrule
22 &         & no  & 507 &   $\times$     &  $\times$    \\ %\midrule
23 & $23^1$  & yes & 553 & $\nicefrac{23}{553} \approx  0.042$ & $\nicefrac{1}{23} \approx 0.043$ \\ %\midrule
24 &         & unknown & 601 &   ?     & ? \\ %\midrule
25 & $5^2$   & yes & 651 & $\nicefrac{25}{651} \approx 0.038$  & $\nicefrac{1}{25} \approx 0.040$ \\ \midrule
26 &         & unknown & 703 &   ?     &   ?   \\ %\midrule
27 & $3^3$   & yes & 757 & $\nicefrac{27}{757}  \approx 0.036$  & $\nicefrac{1}{27} \approx 0.037$ \\ %\midrule
28 &         & unknown & 813 &   ?     &  ?   \\ %\midrule
29 & $29^1$  & yes & 871 & $\nicefrac{29}{871}  \approx 0.033$ & $\nicefrac{1}{29} \approx  0.034$ \\ %\midrule
30 &         & no  & 931 &   $\times$     &  $\times$    \\ \midrule
31 & $31^1$  & yes & 993 & $\nicefrac{31}{993} \approx 0.031$ & $\nicefrac{1}{31} \approx  0.032$ \\ \bottomrule
\end{tabular}
\end{center}
\end{table}
\section{Balanced Incomplete Block Designs \label{sec:BIBD}}

Balanced incomplete block designs are a slightly more general structure than the  projective and affine planes explored in Section \ref{FPP}.

\begin{defi}\cite{mathon2006} A {\em balanced incomplete block design} (BIBD) is a set of $v$ symbols and $m$ blocks, such that each block contains $k$ symbols, each symbol appears exactly $r$ times, and each pair of symbols appears together in exactly $\lambda$ blocks.  A BIBD may be denoted as a $t-(v,k,\lambda)$ design.
\end{defi}

Let the symbols of the BIBD represent data groups, and the blocks represent machines: In an ATAC design based on a BIBD, there are $v$ data groups, $m$ machines, each machine holds $k$ data groups, each data group is distributed to $r$ machines, and each pair of data groups appear on exactly $\lambda$ machines.

Note that it is common to use $'b'$ to denote the number of blocks, however we have chosen $'m'$ for consistency with the number of machines.

The parameters of a BIBD design are highly constrained  \cite{BruckRyser1977, mathon2006} with 
\begin{equation}
    vr=mk \label{eqn:vr}
\end{equation} and 
\begin{equation}
r(k-1)=\lambda(v-1)\label{eqn:rk}
\end{equation}
BIBD can be specified using $v$, $k$, and $\lambda$, and are often written as $2-(v,k,\lambda)$ designs, the $2$ referring to the pairs of elements that are balanced.  Since we aim to minimise data replication, a good ATAC design has $\lambda=1$. Note that  Projective and Affine planes as explored in Section \ref{FPP} are specific examples of $2\!-\!(v,k,1)$ Designs; Projective planes are $2\!-\!(n^2+n+1, n, 1)$ designs, and Affine Planes are $2\!-\!(n^2, n, 1)$ designs.

Let $D$ be the incidence matrix of a $2-(v,k,1)$ design, and  $\vec{s}=(\nicefrac{1}{v}, \nicefrac{1}{v}, \cdots, \nicefrac{1}{v})^T\in\mathbb{R}^v$, then $(D,\vec{s})$ is an ATAC design and 
\[D\vec{s}=(\nicefrac{k}{v}, \nicefrac{k}{v}, \cdots, \nicefrac{k}{v})^T=\vec{f}\in\mathbb{R}^b.\]

The data limit of an ATAC design based on a $2-(v,k,1)$ is $L=\nicefrac{k}{v}$.
From equations (\ref{eqn:vr},\ref{eqn:rk}),  the number of machines used in an ATAC design based on a $2-(v,k,1)$ design is $m=\frac{v(v-1)}{k(k-1)}$.

We have already seen  in section \ref{FPP} that projective and affine planes construct ATAC designs with minimal data limit.  We now show that there are other parameters for which BIBD exist and construct good ATAC designs.

\begin{lem} \label{lem:1onm}
Let $D$ be a $2-(v,k,1)$ design with $m=\frac{v(v-1)}{k(k-1)}$ blocks, then when $v=k, k^2,$ or $k(2k-1)$:
\begin{equation}
\frac{k}{v}=\frac{1}{\left\lfloor\sqrt{m}\right\rfloor}. \label{eqn:casestar}    
\end{equation}

\end{lem}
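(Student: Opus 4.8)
The plan is to prove the identity by treating the three prescribed values of $v$ separately. In each case I will first simplify $m=\frac{v(v-1)}{k(k-1)}$ into a polynomial in $k$, then pin down $\left\lfloor\sqrt{m}\right\rfloor$ exactly, and finally check that $\frac{k}{v}$ equals $\frac{1}{\left\lfloor\sqrt{m}\right\rfloor}$. Since $\frac{k}{v}$ is immediately $\frac{1}{1}$, $\frac{1}{k}$, and $\frac{1}{2k-1}$ in the three cases respectively, the real content is establishing that $\left\lfloor\sqrt{m}\right\rfloor$ takes the values $1$, $k$, and $2k-1$.

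For the simplification step the cancellations are clean. When $v=k$, numerator and denominator coincide, giving $m=1$. When $v=k^2$, I would factor $v-1=k^2-1=(k-1)(k+1)$ so that the $k(k-1)$ cancels and $m=k(k+1)=k^2+k$. When $v=k(2k-1)$, the key algebraic step is to factor $v-1=2k^2-k-1=(k-1)(2k+1)$; then $v(v-1)=k(k-1)(2k-1)(2k+1)=k(k-1)(4k^2-1)$, and the $k(k-1)$ cancels to leave $m=4k^2-1$. This factorization of $v-1$ in the third case is the one nonobvious manipulation, and I expect it to be the main (though modest) obstacle.

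It then remains to read off the floor of the square root by sandwiching $m$ between consecutive perfect squares. For $v=k$, $m=1$ is itself a square, so $\left\lfloor\sqrt{m}\right\rfloor=1$. For $v=k^2$, the inequalities $k^2 < k^2+k < (k+1)^2=k^2+2k+1$ (valid for $k\ge 1$) give $\left\lfloor\sqrt{m}\right\rfloor=k$. For $v=k(2k-1)$, the inequalities $(2k-1)^2=4k^2-4k+1 \le 4k^2-1 < 4k^2=(2k)^2$ (the left inequality holding since $k\ge 1$) give $\left\lfloor\sqrt{m}\right\rfloor=2k-1$. Substituting these floor values into $\frac{1}{\left\lfloor\sqrt{m}\right\rfloor}$ and comparing with the already-computed values of $\frac{k}{v}$ closes each case and completes the proof.
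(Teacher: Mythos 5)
Your proof is correct, and all three case computations check out: the factorization $v-1=(k-1)(2k+1)$ in the third case, the resulting $m=4k^2-1$, and the sandwiching inequalities $k^2<k^2+k<(k+1)^2$ and $(2k-1)^2\le 4k^2-1<(2k)^2$ are all valid for $k\ge 2$ (and $k\ge 2$ is implicit, since $k=1$ makes $m$ undefined). Your route is, however, genuinely different from the paper's --- in fact it runs in the opposite logical direction. The paper \emph{assumes} the identity $\frac{k}{v}=\frac{1}{\lfloor\sqrt{m}\rfloor}$, rewrites it as $\frac{v}{k}=\bigl\lfloor\sqrt{vr/k}\bigr\rfloor$, deduces $(\nicefrac{v}{k})^2\le(\nicefrac{v}{k})r<(\nicefrac{v}{k}+1)^2$ and hence $r\in\{\nicefrac{v}{k},\nicefrac{v}{k}+1,\nicefrac{v}{k}+2\}$, and then uses $r(k-1)=v-1$ to conclude $v\in\{k,k^2,k(2k-1)\}$; that is, it proves the converse, characterizing the only parameter families for which the identity can hold. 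Your direct verification proves the implication exactly as the lemma states it (given $v$, the identity follows), which is arguably the more faithful reading of the statement and avoids any worry about whether the paper's chain of rearrangements is reversible. What the paper's approach buys in exchange is exhaustiveness: it explains why these three families are the complete list, which is the point the surrounding discussion leans on. The two arguments together establish the equivalence.
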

\begin{proof}
Note that from equation (\ref{eqn:vr}) we know that   $m=\nicefrac{vr}{k}$. Rearranging equation (\ref{eqn:casestar}) becomes
%\begin{equation}
%x=\left\lfloor\sqrt{xr}\right\rfloor 
%\end{equation}
\begin{equation}
\frac{v}{k}=\left\lfloor\sqrt{\frac{vr}{k}}\right\rfloor 
\end{equation}
Hence, $\nicefrac{v}{k}\in \mathbb{Z}$ and  $(\nicefrac{v}{k})^2\leq (\nicefrac{v}{k})r <(\nicefrac{v}{k}+1)^2$. Thus $r=\nicefrac{v}{k},\nicefrac{v}{k}+1$, or $\nicefrac{v}{k}+2$.

Using equation (\ref{eqn:rk}) we find that $v=k$, $v=k^2$, or $v=k(2k-1)$. \qed
\end{proof}
If $v=k$, then the ATAC design is the trivial case of all data on one machine.
When $v=k^2$, the ATAC design is an affine plane as described in Theorem \ref{thm:AG},  which has minimal data limit.  

When $v=k(2k-1)$, then $m=4k^2-1$, so $\nicefrac{\left\lfloor\sqrt{m}\right\rfloor+1}{m}<\nicefrac{1}{\left\lfloor\sqrt{m}\right\rfloor}$, the $2-(k(2k-1),k,1)$ designs have a low data limit, but not quite at the minimal limit given by Theorem \ref{thm:bound}.

\begin{lem}\label{lem:monm}
Let $D$ be a $2-(v,k,1)$ design with $m=\frac{v(v-1)}{k(k-1)}$ blocks, and each symbol repeated $r=\frac{v-1}{k-1}$ times, then when $r\leq \sqrt{m}+1$:
\begin{equation}\label{eqn:casebox}
\frac{k}{v}=\frac{\left\lfloor\sqrt{m}\right\rfloor+1}{m}.
\end{equation}
when $r\leq \sqrt{m}+1$.
\end{lem}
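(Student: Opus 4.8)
The plan is to reduce the target identity (\ref{eqn:casebox}) to the single statement $r = \lfloor\sqrt{m}\rfloor + 1$, and then to establish this by sandwiching $\sqrt{m}$ strictly between $r-1$ and $r$.

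First I would rewrite the left-hand side of (\ref{eqn:casebox}) using equation (\ref{eqn:vr}). Dividing $vr = mk$ by $vm$ gives $\frac{k}{v} = \frac{r}{m}$, so (\ref{eqn:casebox}) is equivalent to $r = \lfloor\sqrt{m}\rfloor + 1$. It therefore suffices to show $\lfloor\sqrt{m}\rfloor = r - 1$, which follows once I establish the two-sided bound $r - 1 \le \sqrt{m} < r$.

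The lower bound $r - 1 \le \sqrt{m}$ is exactly the hypothesis $r \le \sqrt{m} + 1$ (both sides being nonnegative for a genuine design). For the strict upper bound I would derive a closed form for $m$ in terms of $r$ and $k$. Taking $\lambda = 1$ in equation (\ref{eqn:rk}) gives $v = r(k-1) + 1$; substituting this into $m = \nicefrac{vr}{k}$ (equation (\ref{eqn:vr})) and simplifying yields
\[
m = r^2 - \frac{r(r-1)}{k}.
\]
Because $D$ is a proper incomplete block design we have $v > k$, so $r \ge 2$ and $k \ge 2$, and the correction term $\frac{r(r-1)}{k}$ is strictly positive. Hence $m < r^2$, i.e.\ $\sqrt{m} < r$.

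Combining the two bounds gives $r - 1 \le \sqrt{m} < r$, and since $r-1$ is an integer this forces $\lfloor\sqrt{m}\rfloor = r - 1$, completing the reduction and hence the lemma. The one step worth flagging is the \emph{strictness} of $m < r^2$: it relies on $\frac{r(r-1)}{k} \neq 0$, which fails only in the degenerate case $v = k$ (where $r = 1$ and $m = 1$). That is precisely the trivial one-machine design already set aside in Lemma \ref{lem:1onm}, so the assumption that $D$ is a genuine incomplete design is exactly what makes the inequality strict and the floor computation valid.
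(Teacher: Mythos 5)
Your proof is correct and follows essentially the same route as the paper's: reduce the identity to $r=\left\lfloor\sqrt{m}\right\rfloor+1$, obtain the lower bound $(r-1)^2\le m$ from the hypothesis, and obtain the strict upper bound $m<r^2$ from the non-degeneracy of the design (your $r>1$ is equivalent to the paper's $v>k$, which it phrases as $\nicefrac{v}{k}<\nicefrac{(v-1)}{(k-1)}$). Your explicit closed form $m=r^2-\frac{r(r-1)}{k}$ makes the strictness more transparent than the paper's presentation, but the underlying argument is the same.
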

\begin{proof}
%Substituting in the expansion for $m$ and rearranging equation (\ref{eqn:casebox})
%\begin{equation}\label{eqn:k}
%\frac{(v-1)}{(k-1)}  =  \left\lfloor\sqrt{\frac{v(v-1)}{k(k-1)}}\right\rfloor +1 
%\end{equation}
Using equation (\ref{eqn:rk}) we can substitute $\nicefrac{r}{m}=\nicefrac{k}{v}$ into equation \ref{eqn:casebox}:
\begin{equation}\label{eqn:k}
r  =  \left\lfloor\sqrt{m}\right\rfloor+1.
\end{equation}
Using equation \ref{eqn:rk} we can substitute $r=\frac{v-1}{k-1}$ into equation (\ref{eqn:k}):
\begin{equation}\label{eqn:k2}
r  =  \left\lfloor\sqrt{\frac{v}{k}r}\right\rfloor +1 
\end{equation}
which puts bounds on the value of $\nicefrac{v}{k}$ in terms of $r$.  \begin{equation}
    (r-1)^2\leq \nicefrac{v}{k}r<r^2
\end{equation}
Expanding, and rewriting using equation (\ref{eqn:rk}) and (\ref{eqn:vr}) we get two inequalities.
\begin{align}\label{inequality<}
 \nicefrac{v}{k} & <\frac{v-1}{k-1}  \\
 \label{inequalityleq}
(r-1)^2 & \leq m
\end{align}

From equation (\ref{eqn:rk}), $\nicefrac{v}{k}<\frac{v-1}{k-1}$ whenever $v>k$.  All BIBD have $v>k$, so all $BIBD$ satisfy   inequality (\ref{inequality<}).

%For  inequality (\ref{inequalityleq}) rearrange to 
%\begin{equation}
%r^2-2r+1\leq m
%\end{equation}
%which is true whenever $r\geq 1+\sqrt{m}$.

Hence $2\!-\!(k,v,1)$ designs satisfy equation (\ref{eqn:casebox}) when $(r-1)^2\leq m$. \qed
\end{proof}

% Projective planes of order $n$ are $2-(n^2+n+1, n+1,1 )$ designs, and affine planes of order $n$ are $2(n^2, n, 1)$ designs.  
Whilst finite fields can be leveraged to construct affine and projective planes of various sizes, there is no known way to construct general $2\!-\!(v,k,1)$ designs so easily.  There are published tables of known $2\!-\!(v,k,1)$ designs \cite{mathon2006} from which an ATAC design could be constructed.  

\paragraph{:} 
\begin{itemize}
    \item A $2\!-\!(13,3,1)$ design has $m=26$ blocks.   The data limit is $L=\nicefrac{\left\lfloor\sqrt{m}\right\rfloor+1}{m}=\frac{k}{v}=\nicefrac{3}{13}=0.23$, which is a little higher than the lower bound of $L_{min}=0.2$ of Theorem \ref{thm:bound}. 

\item A $2\!-\!(19,3,1)$ design has $m=57$ blocks.  Note that $r=9$, and $(r-1)^2=64>57=m$, so this design does not meet the criterion of either Lemma \ref{lem:1onm} or \ref{lem:monm}. The data limit $L=\nicefrac{3}{19}=0.16$ which is higher than the bound of $L_{min}=\nicefrac{1}{7}=0.14$. Note that $PG(2,7)$ also has $m=57$, and has minimal data limit.

\item A $2\!-\!(25,4,1)$ design has $m=50$ blocks. The data limit $L=\nicefrac{8}{50}=0.16$ which is slightly higher than the lower bound of $_{min}L=\nicefrac{1}{7}=0.14$. 

\item  A  $2\!-\!(28,4,1)$ design has $m=63$ blocks.  The  data limit $L=\nicefrac{4}{26}= 0.142$ is slightly higher than the lower bound of $L=\nicefrac{8}{63}=0.126$. 
\end{itemize}

The four examples given here are shown as green triangles in Figure \ref{graph}. Balanced incomplete block designs can be used to construct good ATAC designs. 
ATAC designs based on $2\!-\!(v,k,1)$ designs have low data limit, which in the case of affine an projective planes, is minimal.
\section{Load Balancing and Fault Tolerance \label{LoadBalance}}

%The data distributions generated ensure:
%\begin{itemize}
%\item{data locality, meaning that all pairs of data objects can be compared on at least one machine using only local data, and}
%\item{the amount of data replication has been minimized, meaning that every machine has an equally small amount of data that it must store}
%\end{itemize}
We now turn our attention to the problem of scheduling comparison tasks with the goal of {\em load balancing} the compute cluster.  
The computation tasks are scheduled so that each machine does an approximately equal amount of computational work.
%WAK: can't say without loss of generality!
We assume that all machines are equally capable and all comparison tasks take approximately the same amount of time.

If a given pair of data items exists on only a single machine, the comparison task must be scheduled on that machine.
If the pair of data items is available on multiple machines, then scheduling comparison tasks involves some choices. A data distribution is {\em load balanced} if every machine has the same amount of comparison computations.

Formally it is only possible to load balance if the number of computations is divisible by the number of machines.  If no machine is scheduled to undertake any more that 1 more computations than any other machine in the cluster, then the computation is \emph{effectively load balanced}.  In practice, with large data sets, the goal is for effective load balancing.

Heterogeneous compute clusters and file sizes are left as future work.

For data distributions based on affine planes, projective planes and BIBD, load balancing is possible. 
\begin{thm}\label{thm:dist}
Let $(D,s)$  be a data distribution derived from a $2\!-\!(v,k,1)$ design, then an ATAC computation can be effectively load balanced.
\end{thm}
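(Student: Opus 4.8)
The plan is to split the comparison tasks into two types and handle them separately. Write $n$ for the number of data items; since $\vec{s}=(\nicefrac1v,\dots,\nicefrac1v)^T$, each of the $v$ data groups contains $g=\nicefrac nv$ items (rounding to integer group sizes is negligible for large $n$, as already noted). Every one of the $\binom n2$ pairwise comparisons is either a \emph{between-group} comparison, whose two items lie in different data groups, or a \emph{within-group} comparison, whose two items lie in the same group. I would show that the between-group tasks are forced and already perfectly balanced, and that the within-group tasks, which carry all the scheduling freedom, can be distributed so that no two machines differ by more than one task.

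For the between-group tasks, recall that the design has $\lambda=1$, so each pair of distinct data groups occurs together on exactly one machine. Hence every cross pair of items is available on exactly one machine and its comparison must be scheduled there; there is no choice. Since each machine (block) holds exactly $k$ groups, it is responsible for the $\binom k2$ group-pairs it contains, each contributing $g^2$ item comparisons, so every machine receives exactly $\binom k2 g^2$ between-group tasks. This count is identical for all machines, so the between-group load is perfectly balanced on its own, precisely because $\lambda=1$ and every block has the same size $k$.

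The freedom lies entirely in the within-group tasks: the $N=\binom g2$ internal comparisons of a group may be run on any of the $r$ machines holding that group. I would model this with the bipartite incidence graph whose two sides are the $v$ groups (each of degree $r$) and the $m$ machines (each of degree $k$), assigning to each edge the integer number of that group's internal comparisons run on that machine. The uniform fractional assignment places $\nicefrac Nr$ on every edge, giving each group row-sum exactly $N$ and each machine column-sum exactly $\nicefrac{kN}{r}$. The task is to round this to integers while keeping each row-sum equal to $N$ (so every group's tasks are fully scheduled) and each column-sum within one of $\nicefrac{kN}{r}$. The main obstacle is exactly this rounding step. I would resolve it using total unimodularity of the vertex--edge incidence matrix of a bipartite graph: the polytope of edge weightings $y$ with $\lfloor\nicefrac Nr\rfloor\le y_e\le\lceil\nicefrac Nr\rceil$ on edges, row-sums equal to $N$, and column-sums in $[\lfloor\nicefrac{kN}{r}\rfloor,\lceil\nicefrac{kN}{r}\rceil]$ contains the fractional solution and is integral, hence has an integer point. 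Equivalently, one repeatedly cancels fractional cycles in the support without changing any row or column sum until all entries are integral. Such a $y$ gives every machine a within-group load of either $\lfloor\nicefrac{kN}{r}\rfloor$ or $\lceil\nicefrac{kN}{r}\rceil$, two consecutive integers.

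Finally I would combine the two parts. Each machine's total load is its fixed between-group count $\binom k2 g^2$ plus a within-group count lying in $\{\lfloor\nicefrac{kN}{r}\rfloor,\lceil\nicefrac{kN}{r}\rceil\}$, so the totals on any two machines differ by at most one, which is the definition of effective load balancing. The only genuinely delicate point is the within-group rounding; the between-group half is forced and automatically uniform.
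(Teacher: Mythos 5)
Your proof is correct and follows essentially the same route as the paper's: the inter-group comparisons are forced onto a unique machine by $\lambda=1$ and are automatically uniform because every block holds exactly $k$ equal-sized groups, while the intra-group comparisons are spread over the $r$ machines holding each group. The one place you go beyond the paper is the total-unimodularity rounding step --- the paper simply says to ``distribute the intra-group comparison tasks equally amongst those $r$ machines'' --- so your argument actually makes the ``effectively load balanced'' (within one task) claim rigorous where the paper leaves the integrality issue implicit.
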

\begin{proof}
There are $m$ machines and $m$ data groups. 
Every data group $g_i$ is compared with every other data group $g_j$, $i, j\in\{1,2,\dots m\} = 1$.
As each of the data groups is of equal size, comparing any distinct pair of data groups takes the same amount of time as comparing any other distinct pair of data groups.
If $g_i \neq g_j$, then there exists precisely one machine that holds both data groups and that machine is allocated those comparison tasks. 
Each machine is therefore  allocated an equal number  of distinct pairs of data groups to compare.

Every item of data group $g_i$ must also be compared with every other item of the same data group $g_i$, $i=1,\cdots,m$.  
%The only non-trivial task allocation problem is deciding on which machine to compare all of the data items within a group with all other data items within that same group.
There  are $r=\nicefrac{v-1}{k-1}$ machines holding group $g_i$ and therefore capable of comparing data within that data group. 
Distribute the intra-group comparison tasks equally amongst those $r$ machines.  As each machine hosts an equal number ($k$) of data groups, load balance is maintained across all machines. \qed
\end{proof}

Projective Planes and Affine Planes are specific cases of $2-(v,k,1)$ designs, so from Theorems \ref{thm:PG}, \ref{thm:AG} and \ref{thm:dist} we have the following corollary:

\begin{cor}
Let $(D,s)$  be a data distribution derived from a projective plane or an affine plane, then an ATAC computation can be load balanced, and the amount of data replication is minimal.
\end{cor}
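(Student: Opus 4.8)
The plan is to obtain the corollary directly by combining the three referenced results, using the identification (already recorded in Section~\ref{sec:BIBD}) that projective and affine planes are special cases of $2\!-\!(v,k,1)$ designs. This identification is the bridge the whole argument rests on: a projective plane of order $n$ is a $2\!-\!(n^2+n+1,\,n+1,\,1)$ design and an affine plane of order $n$ is a $2\!-\!(n^2,\,n,\,1)$ design, so any data distribution $(D,\vec{s})$ derived from either plane satisfies the hypotheses of Theorem~\ref{thm:dist}.

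First I would dispatch the load-balancing claim. Since such a distribution is, by the above, a distribution derived from a $2\!-\!(v,k,1)$ design, Theorem~\ref{thm:dist} applies unchanged and yields that the ATAC computation can be (effectively) load balanced: the inter-group comparisons split evenly because $\lambda=1$ forces each pair of groups onto exactly one machine, and the intra-group comparisons for each group are distributed across the $r$ machines holding it, with each machine hosting the same number $k$ of groups. Next I would address minimal replication by appealing to Theorems~\ref{thm:PG} and~\ref{thm:AG}, which already certify that the projective-plane and affine-plane distributions attain the lower bound of Theorem~\ref{thm:bound} and hence have minimal data storage limit $L=\nicefrac{k}{v}$. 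Because every group has equal size $\nicefrac{1}{v}$ and is replicated onto exactly $r$ machines, the total replicated volume is $r$ times the raw data and is governed by $L$; so minimality of $L$ certifies minimality of replication. Equivalently, $\lambda=1$ is the least value compatible with the data-locality requirement that every pair of groups co-occur on at least one machine, so no admissible distribution can do better.

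The main obstacle I anticipate is expository rather than mathematical: making precise the two slight mismatches between the corollary's phrasing and the machinery it invokes. One is the unqualified word \emph{load balanced} in the statement versus the \emph{effectively load balanced} conclusion of Theorem~\ref{thm:dist}; the honest reading is that effective load balancing is what is guaranteed, with exact balancing holding whenever the relevant counts are divisible by the number of machines. The other is the step from ``minimal data storage limit'' (the quantity actually bounded in Theorems~\ref{thm:PG} and~\ref{thm:AG}) to ``minimal data replication'' (the quantity named here); this is immediate only because these designs are uniform, so the replication factor $r$ and the per-machine fraction $\nicefrac{k}{v}$ are both fixed by the order of the plane and optimality in one sense coincides with optimality in the other. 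I would state these links explicitly so the one-line deduction from Theorems~\ref{thm:PG},~\ref{thm:AG} and~\ref{thm:dist} is rigorous.
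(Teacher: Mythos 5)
Your proposal matches the paper's own derivation: the corollary is obtained exactly by noting that projective and affine planes are $2\!-\!(v,k,1)$ designs and then citing Theorems~\ref{thm:PG}, \ref{thm:AG} and \ref{thm:dist}. Your additional remarks on the ``effectively load balanced'' qualifier and on the passage from minimal storage limit to minimal replication are sensible clarifications but do not change the argument.
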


Given a more general data distribution, the comparison task allocation problem is more complex. There may be distinct pairs of data groups that exist on more than one machine, and data groups are not necessarily of uniform size.

%Given a data distribution $(D,\vec{s})$, it is possible to derive the best possible computational distribution using a linear programming formulation:
%For each pair of groups $i$ and $j$, let real variable $c_{i,j,m}$ represent the fraction of total comparisons that will occur between group $i$ and group $j$ on machine $m$.
%We then minimize the computational load $C$ subject to the following constraints:
%\begin{itemize}
%\item The total work done on each machine is
%$$\forall m, \sum_{i,j} c_{i,j,m} \le C.$$
%If $D_{i,m} = 0$ or $D_{j,m} = 0$ then  group $i$ cannot be compared with group $j$ on machine $m$, so $c_{i,j,m} = 0$. 
%Meaning that in practice the variable $c_{i,j,m}$ can be eliminated from the problem.

%\item The fraction of total comparisons that occur between group $i$ and group $j$  depends on the size of those groups so:
%$$\forall i,j, \sum_m c_{i,j,m} = s_i s_j$$
%\end{itemize}
%It is worth mentioning that once the data distribution $(D,\vec{s})$ is predetermined, $D_{i,m}$ and $s_i$ are constants, not variables.

%All of the variables are real-valued so we can use a polynomial-time linear programming solver to efficiently schedule computational tasks to machines for large clusters (of up to 1000 machines).

An important aspect of big data frameworks is {\em fault tolerance}.
The computation should be able to be completed even if one of the machines in the cluster fails during the computation.
The MapReduce framework achieves fault tolerance by replicating each data item to multiple machines.

In the all-to-all comparison framework, each data item is replicated to several machines. Therefore, if one machines fails, there are other machines from which the data items could be retrieved.
There is no guarantee that every pair of data items  still exist together on at least one machine after failure.  
However the cost of occasionally retrieving data from a remote node is reasonable, as it will allow the computation to complete.
Hence any ATAC design is fault tolerant.

\section{Conclusion and Future Work \label{sec:conclusion}}

Data locality criteria have been presented in this paper for scalable computation of All-to-All comparison problems of big data in a distributed computing framework. 
The  goal is to minimize the amount of data replication across the system, minimize the amount of storage space needed on each machine, and balance the computational load across the machines. A theoretical lower bound is established on the amount of data stored on each machine. The geometric structures of Projective and Affine planes are used to construct ATAC data distributions that achieve the lower bound for data storage and balance the computational load across the system. Balanced Incomplete Block Designs construct good ATAC designs, though not necessarily optimal.

Affine planes, Projective planes and Balanced Incomplete Block Designs are only applicable in select cases, so further work is required to construct good data distributions for general sizes of compute cluster.
The general problem of constructing good data distributions is computational difficult, but it remains unknown whether it is NP complete.  Future work includes developing techniques for constructing data distributions which may include good heuristic solutions, and considering heterogeneous systems.

%Future work includes determining if the optimal data distribution problem in general is NP-complete and if not, trying to generate an efficient algorithm that works for any number of machines.

%\input{11-Bibliography.tex}

%\begin{acknowledgements}
%If you'd like to thank anyone, place your comments here
%and remove the percent signs.
%\end{acknowledgements}

% BibTeX users please use one of
%\bibliographystyle{spbasic}      % basic style, author-year citations
\bibliographystyle{spmpsci}      % mathematics and physical sciences
\bibliography{ATACbib.bib}   % name your BibTeX data base

% Non-BibTeX users please use
%\begin{thebibliography}{}
%
% and use \bibitem to create references. Consult the Instructions
% for authors for reference list style.
%
%\bibitem{RefJ}
% Format for Journal Reference
%Author, Article title, Journal, Volume, page numbers (year)
% Format for books
%\bibitem{RefB}
%Author, Book title, page numbers. Publisher, place (year)
% etc
%\end{thebibliography}

\section*{Ethical Considerations}
On behalf of all authors, the corresponding author states that there is no conflict of interest.

\end{document}